\newtheorem{theorem}{Theorem}[section]
\newtheorem{lemma}[theorem]{Lemma}
\newtheorem{remark}[theorem]{Remark}
\newtheorem{definition}[theorem]{Definition}
\newtheorem{proposition}[theorem]{Proposition}
\newtheorem{corollary}[theorem]{Corollary}
\newtheorem{example}[theorem]{Example}
\long\def\symbolfootnote[#1]#2{\begingroup%
\def\thefootnote{\fnsymbol{footnote}}\footnote[#1]{#2}\endgroup}
\newcommand\C{{\mathcal C}}
\begin{document}

\title{The torsion of a finite quasigroup quandle is annihilated by its order}
\author{J\'ozef H. Przytycki, Seung Yeop Yang}

\thispagestyle{empty}

\begin{abstract}
We prove that if $Q$ is a finite quasigroup quandle, then $|Q|$ annihilates the torsion of its rack homology.
\end{abstract}

\maketitle
\markboth{\hfil{\sc Annihilated by order}\hfil}
\ \
\tableofcontents

\section{Introduction}\label{chpt:intro}

\subsection{History of the problem}\

It is a classical result that the reduced homology of a finite group $G$ is annihilated by its order $|G|$ \cite{Bro}.
Namely, we consider the chain homotopy $(g_{1},\ldots,g_{n}) \mapsto \sum\limits_{y \in G}(y,g_{1}, \ldots,g_{n})$
between $|G|\textrm{Id}$ and the zero map.

From the very beginning of the rack homology (between 1990 and 1995 \cite{FRS-1,FRS-2,Fenn})
the analogous result was suspected. The first general results in this direction were obtained
independently about 2001 by Litherland and Nelson \cite{L-N}, and P.~Etingof and M.~Gra{\~n}a \cite{E-G}.
 We give here an outline of known results\footnote{The necessary definitions about rack and quandle homology
are given in the next subsection.}.
In \cite{L-N} it is proven that if $(Q;*)$ is a finite homogeneous rack (this includes quasigroup
racks), then the torsion of $H_n^R(Q)$ is annihilated by $|Q|^n$. In \cite{E-G} it is proven that if $(Q;*)$
is a finite rack and $N=|G^0_Q|$ is the order of the group of inner automorphisms of $Q$, then the only primes which
can appear in the torsion of $H_n(Q)$ are those dividing $N$ (the case of connected Alexander quandles was proven before by T. Mochizuki \cite{Moch}). The results in \cite{L-N} and  \cite{E-G} are
independent as the latter is for all finite racks and the former is for only homogeneous racks but gives
concrete approximation for torsion. The result in \cite{L-N} is generalized in \cite{N-P-1} and in particular,
it is proven there that the torsion part of the homology of the dihedral quandle $R_3$ is annihilated by $3$.
In \cite{N-P-2} it is conjectured that for a finite quasigroup quandle, torsion of its homology is annihilated
by the order of the quandle. The conjecture is proved by T.~Nosaka for finite Alexander quasigroup quandles \cite{Nos}
(see also \cite{Cla} for the case of dihedral quandles of prime order).

In this paper we prove the conjecture in full generality (Theorem \ref{1}).

\subsection{Racks, quandles, and their homology}\

In this section we review some definitions and preliminary facts.\\
The algebraic structure $(X;*)$ with a universe $X$ and a binary operation $*:X \times X \rightarrow X$ is called a \emph{magma}. If the binary operation satisfies the right self-distributive property, $(a*b)*c=(a*c)*(b*c)$ for any $a,b,c \in X$, then the magma is said to be a \emph{shelf}. Let $b \in X$ and $*_{b}:X \rightarrow X$ be a map given by $*_{b}(a)=a*b$.
If $*_{b}$ is invertible for any $b \in X$, then the shelf is called a \emph{rack}. We use the notation $\bar *_{b}= *_{b}^{-1}$ and $a \bar * b = \bar *_b(a)$, thus if $a*b=c$ then $c \bar *b=a$. If the binary operation $*$ is idempotent, then the rack is said to be a \emph{quandle}.
The three axioms of a quandle are motivated by the three Reidemeister moves \cite{Joy, Matv}.
If we fix a magma $(X;*)$ and color arcs of the diagram by elements of $X$ (with the convention of Figure $1(i)$), then in order to preserve the cardinality of the set of the colorings by Reidemeister moves, we have to assume that the magma satisfies the quandle axioms.

Quandles can be used to classify classical knots \cite{Joy, Matv}. If the quandle has the quasigroup property, i.e. for any $a,b \in X$, the equation $a*x=b$ has a unique solution $x$, then it is called a \emph{quasigroup quandle}.\footnote{In the theory of quasigroups, the following standard notation is used: if we start from a magma $(X;*)$ and
if $a*b=c$ then $a= c \diagup b$,
and $b=a \diagdown c$. In knot theory one uses $\bar *$ for $\diagup$ and $\circ$ for $\diagdown $. See \cite{Gal} for a review of quasigroups. Quasigroup quandles are often called Latin quandles.}

If $(X;*)$ is a quasigroup quandle, then at any crossing, coloring of two arcs determines the color of the third arc (see Figure $1(ii)$).

\centerline{{\psfig{figure=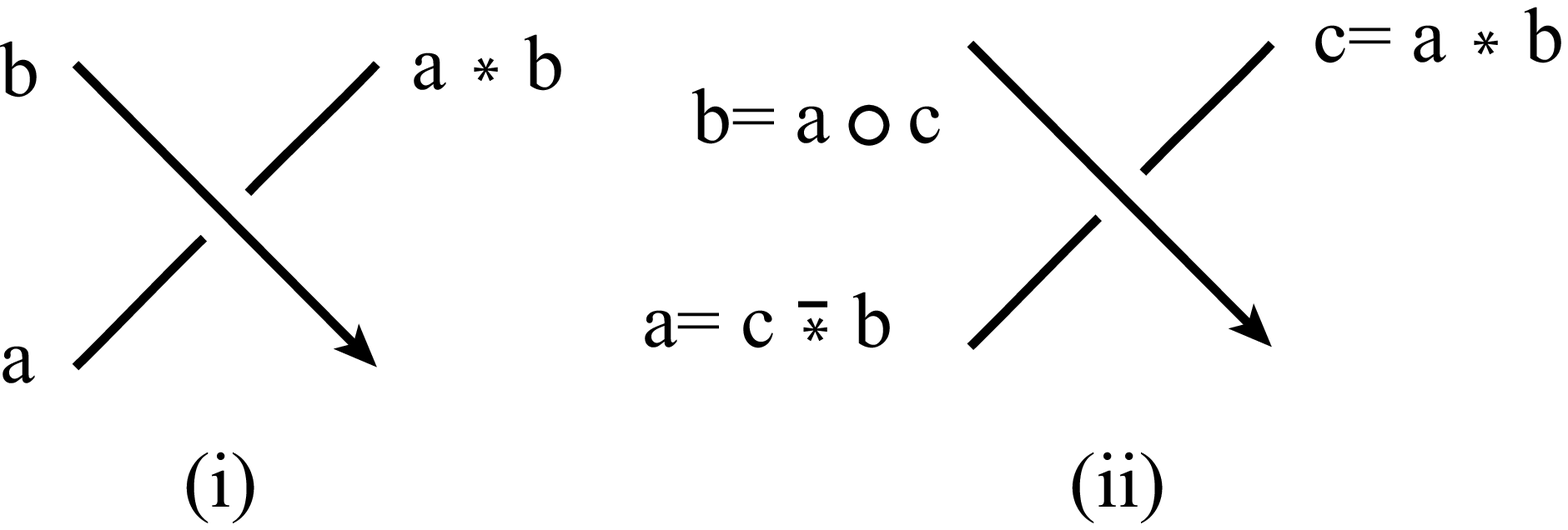,height=3.9cm}}}\ \\ \ \\
\centerline{Figure 1; Quandle coloring and quasigroup quandle coloring}\ \\

\begin{example}\cite{Tak}
If $G$ is an abelian group, we define a quandle called a Takasaki quandle (or kei) of $G$, denoted by $T(G)$, by taking $a*b=2b-a$. If $G=\mathbb{Z}_{n}$, then we denote $T(\mathbb{Z}_{n})$ by $R_{n}$(dihedral quandle).
\end{example}

Fenn, Rourke, and Sanderson \cite{FRS-3} first defined the rack homology theory and a modification to quandle homology theory was introduced by Carter, Jelsovsky, Kamada, Langford, and Saito \cite{CJKLS} to define knot invariants in a state-sum form (so-called cocycle knot invariants).
We review the definition of rack and quandle homology after \cite{CKS-2}.

\begin{definition}
Let $C_{n}^{R}(X)$ be the free abelian group generated by n-tuples $(x_{1}, \ldots ,x_{n})$ of elements of a rack $X$, i.e. $C_{n}^{R}(X)=\mathbb{Z}X^{n}=(\mathbb{Z}X)^{\otimes n}$. We define a boundary homomorphism $\partial_{n}:C_{n}^{R}(X) \rightarrow C_{n-1}^{R}(X)$ by
$$\partial_{n} (x_{1}, \ldots , x_{n})=\sum\limits_{i=1}^{n}(-1)^{i}(d_{i}^{(*_{0})}-d_{i}^{(*)})(x_{1}, \ldots , x_{n})$$
where $d_{i}^{(*_{0})}(x_{1}, \ldots , x_{n})=(x_{1},\ldots,x_{i-1},x_{i+1},\ldots,x_{n})$ and\\
\hspace*{1.5cm} $d_{i}^{(*)}(x_{1}, \ldots , x_{n})=(x_{1}*x_{i},\ldots,x_{i-1}*x_{i},x_{i+1},\ldots,x_{n}).$\\
Then $(C_{n}^{R}(X),\partial_{n})$ is said to be a rack chain complex of $X$.
\end{definition}

\begin{definition}
For a quandle $X$, we have the subset $C_{n}^{D}(X)$ of $C_{n}^{R}(X)$ generated by n-tuples $(x_{1}, \ldots ,x_{n})$ of elements of $X$ with $x_{i}=x_{i+1}$ for some $i=1,\ldots,n-1$. Then $(C_{n}^{D}(X),\partial_{n})$ is the subchain complex of a rack chain complex $(C_{n}^{R}(X),\partial_{n})$ and it is called the degenerated chain complex of $X$. Then we have the quotient chain complex $(C_{n}^{Q}(X),\partial_{n})=(C_{n}^{R}(X)/C_{n}^{D}(X),\partial_{n})$ and it is called the quandle chain complex.
\end{definition}

\begin{definition}
For an abelian group $G$, we define the chain complex $C_{*}^{W}(X;G)=C_{*}^{W}(X) \otimes G$ with $\partial = \partial \otimes \text{\emph{Id}}$ for W=R, D, and Q. Then the $n$th rack, degenerate, and quandle homology groups of a quandle $X$ with coefficient group $G$ are respectively defined as
$$H_{n}^{W}(X;G)=H_{n}(C_{*}^{W}(X;G)) \hbox{~for W=R, D, and Q}.$$
\end{definition}

The free parts of rack, degenerate, and quandle homology groups of finite racks or quandles were computed in \cite{E-G,L-N}.

\begin{theorem} \cite{E-G,L-N} \label{Theorem 1.4}
Let $\mathcal{O}$ be the set of orbits of a rack $X$ with respect to the action of $X$ on itself by right multiplication. Then
\begin{enumerate}
  \item \text{\emph{rank}}$H_{n}^{R}(X)=|\mathcal{O}|^{n}$ for a finite rack $X$,
  \item \text{\emph{rank}}$H_{n}^{Q}(X)=|\mathcal{O}|(|\mathcal{O}|-1)^{n-1}$ for a finite quandle $X$,
  \item \text{\emph{rank}}$H_{n}^{D}(X)=|\mathcal{O}|^{n}-|\mathcal{O}|(|\mathcal{O}|-1)^{n-1}$ for a finite quandle $X$.
\end{enumerate}
\end{theorem}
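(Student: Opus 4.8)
The plan is to pass to rational coefficients (so that $\mathrm{rank}\,H_n^W(X)=\dim_{\mathbb Q}H_n(C_*^W(X;\mathbb Q))$ for $W=R,Q,D$) and to show that, over $\mathbb Q$, the rack complex of $X$ is chain homotopy equivalent to the rack complex of the \emph{trivial} rack $\mathcal T$ on the orbit set $\mathcal O$ (i.e.\ $\mathcal O$ with $a*b=a$). For $\mathcal T$ one has $d_i^{(*)}=d_i^{(*_0)}$, so the boundary vanishes identically and the three homologies are the chain groups themselves: $H_n^R(\mathcal T;\mathbb Q)=\mathbb Q[\mathcal O]^{\otimes n}$ has dimension $|\mathcal O|^n$, $H_n^Q(\mathcal T;\mathbb Q)$ is spanned by the non-degenerate $n$-tuples of orbits, of dimension $|\mathcal O|(|\mathcal O|-1)^{n-1}$, and $H_n^D$ accounts for the difference. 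So Theorem~\ref{Theorem 1.4} is exactly the assertion that the natural comparison map is a $\mathbb Q$-equivalence.

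First I would set up the comparison maps. The orbit projection $p\colon X\to\mathcal O$ is a rack homomorphism into $\mathcal T$ --- because $a*b=*_b(a)$ lies in the orbit of $a$, since each $*_b$ preserves every orbit setwise --- hence induces a chain map $p_*\colon C_*^R(X;\mathbb Q)\to C_*^R(\mathcal T;\mathbb Q)$, compatible with the degenerate and quandle subquotients. For a section, put $\iota_n(O_1,\dots,O_n)=\frac1{|O_1|\cdots|O_n|}\sum_{x_i\in O_i}(x_1,\dots,x_n)$ (orbit averaging). Then $p_*\iota_n=\mathrm{Id}$, and $\iota$ is a chain map: in $\partial^X\iota_n(O_1,\dots,O_n)$ the sums $\sum_{x_i\in O_i}d_k^{(*_0)}$ and $\sum_{x_i\in O_i}d_k^{(*)}$ coincide, because for fixed $x_k$ the assignment $(x_1,\dots,x_{k-1})\mapsto(x_1*x_k,\dots,x_{k-1}*x_k)$ is a bijection of $O_1\times\cdots\times O_{k-1}$, absorbing the twist. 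Thus $C_*^R(X;\mathbb Q)=\iota\bigl(C_*^R(\mathcal T;\mathbb Q)\bigr)\oplus\ker p_*$ as complexes, and everything comes down to the $\mathbb Q$-acyclicity of $\ker p_*$.

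The main obstacle --- the step I expect to require real work --- is this acyclicity. The key input is that inner automorphisms act trivially on rack homology: each right translation $*_b$ is a rack automorphism (right self-distributivity gives the homomorphism property, the rack axiom the invertibility), and the ``append $b$'' operators $s^{(b)}_n(\vec x)=(-1)^n(x_1,\dots,x_n,b)$ satisfy $\partial s^{(b)}+s^{(b)}\partial=(*_b)_*-\mathrm{Id}$, all compatibly with the orbit splitting, hence on $\ker p_*$; averaging over $b$ (legitimate in characteristic $0$) shows $\frac1{|X|}\sum_b(*_b)_*$ is homotopic to the identity on $\ker p_*$. I would then induct on $n$, peeling off the last tensor factor in $C_n^R(X)=C_{n-1}^R(X)\otimes\mathbb Q X$: using the inductive acyclicity of the reduced lower complex together with the last face $d_n^{(*_0)}-d_n^{(*)}$, the claim reduces to the vanishing of the reduced coinvariants/homology of the finite group $\mathrm{Inn}(X)$ acting on (twisted) permutation modules built from $\mathbb Q X$, which is automatic over $\mathbb Q$. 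Organizing this induction so that the twist by $*_{x_n}$ appearing in $d_n^{(*)}$ is genuinely absorbed by the averaging is the crux of the whole argument.

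Finally, the quandle and degenerate ranks. Since $p_*$ is defined on all three complexes, it induces a morphism between the long exact sequences of $0\to C_*^D\to C_*^R\to C_*^Q\to0$ for $X$ and for $\mathcal T$. Running the same averaging argument on the degenerate subcomplex (filtered by the positions of a repeated coordinate) shows $\ker(p_*|_{C^D})$ is also $\mathbb Q$-acyclic, so $p_*$ is a $\mathbb Q$-equivalence on $C^D$, and then on $C^Q$ by the five lemma. For $\mathcal T$ the long exact sequence degenerates into short exact sequences of chain groups, so the ranks add; transporting this across the isomorphism of long exact sequences and reading off the chain groups of $\mathcal T$ yields $\mathrm{rank}\,H_n^Q(X)=|\mathcal O|(|\mathcal O|-1)^{n-1}$ and $\mathrm{rank}\,H_n^D(X)=|\mathcal O|^n-|\mathcal O|(|\mathcal O|-1)^{n-1}$.
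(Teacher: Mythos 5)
The paper itself gives no proof of Theorem \ref{Theorem 1.4}; it is quoted from \cite{E-G,L-N}, so your argument can only be judged on its own merits. Your setup is sound: the orbit projection $p$ is indeed a rack morphism to the trivial rack on $\mathcal{O}$, the averaging section $\iota$ is a chain map with $p_*\iota=\mathrm{Id}$ (your bijectivity argument for absorbing the twist is correct), so $C_*^R(X;\mathbb{Q})\cong C_*^R(\mathcal{T};\mathbb{Q})\oplus \ker p_*$, and the ``append $b$'' homotopy $\partial s^{(b)}+s^{(b)}\partial=(*_b)_*-\mathrm{Id}$ is a correct, standard fact compatible with this splitting. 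But everything then hinges on the $\mathbb{Q}$-acyclicity of $\ker p_*$, which is not a reduction of the theorem --- it \emph{is} the theorem --- and you do not prove it: you describe an induction ``peeling off the last tensor factor'' and assert that it reduces to ``the vanishing of the reduced coinvariants/homology of the finite group $\mathrm{Inn}(X)$ acting on (twisted) permutation modules built from $\mathbb{Q}X$, which is automatic over $\mathbb{Q}$,'' while yourself flagging the organization of this induction as ``the crux.'' That reduction is not automatic, and as stated it is false: the diagonal $\mathrm{Inn}(X)$-action on a product of orbits is in general not transitive, so the reduced coinvariants of these permutation modules do not vanish. For example, for $X=R_3$ one has $|\mathcal{O}|^2=1$, yet $\mathbb{Q}[X\times X]$ has two-dimensional $\mathrm{Inn}(X)$-coinvariants; the correct count only emerges from the interaction of the twisted faces across different coordinates, i.e.\ exactly the step you leave unexecuted. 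Nor is $\ker p_*$ exhibited as a complex computing group homology of $\mathrm{Inn}(X)$, so Maschke-type vanishing of higher group homology over $\mathbb{Q}$ cannot simply be invoked; note also that each $(*_b)_*$ is already homotopic to the identity individually, so averaging over $b$ by itself adds no new information --- the force must come from how it meshes with the last-face recursion, which is where \cite{E-G} (induction with coefficient systems/transfer) and \cite{L-N} (explicit quasi-idempotent splittings) do their actual work.

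A secondary gap: your deduction of parts (2) and (3) assumes the ``same averaging argument'' works on the degenerate subcomplex, but $\iota$ does not restrict to it --- a degenerate orbit tuple $(O_1,\dots,O_n)$ with $O_i=O_{i+1}$ is sent to an average of tuples with $x_i,x_{i+1}$ in the same orbit but generally $x_i\neq x_{i+1}$, so you have no section of $p_*|_{C^D}$ and no splitting $C^D_*(X;\mathbb{Q})\cong C^D_*(\mathcal{T};\mathbb{Q})\oplus\ker(p_*|_{C^D})$; surjectivity of $p_*$ on degenerate homology would need a separate argument (the standard route instead uses the splitting $H_n^R=H_n^D\oplus H_n^Q$ of \cite{L-N} together with an induction expressing $H^D$ through lower quandle homology). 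So the architecture of your proof is plausible and parts of it are correct, but the central acyclicity statement and the degenerate-complex step are genuine gaps, not routine details.
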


Homology of quandles ((co)cycle invariants) are used successfully in higher dimensional knot theory \cite{CKS-2, P-R}.

If $|\mathcal{O}|=1$, then $X$ is called a \emph{right-connected} rack. In this case, \textrm{rank}$H_{n}^{R}(X)=1$ for any rack $X$.

\section{The proof of Theorem \ref{1}}\label{Section 2}
\begin{theorem} \label{1}
Let $Q$ be a finite quasigroup quandle. Then the torsion subgroup of $H_{n}^{R}(Q)$ is annihilated by $|Q|$.
\end{theorem}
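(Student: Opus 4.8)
The plan is to imitate, as closely as the weaker axioms permit, the classical chain homotopy for group homology recalled in the introduction, and to feed in the quasigroup axiom at the one place where the group argument has no analogue. \emph{Step 1 (a homotopy from $|Q|\,\mathrm{Id}$ to an ``averaging'' chain map).} I would define $\Psi_n\colon C_n^R(Q)\to C_{n+1}^R(Q)$ by $\Psi_n(x_1,\dots,x_n)=(-1)^n\sum_{y\in Q}(x_1,\dots,x_n,y)$, inserting a summed‑over variable in the \emph{last} slot (prepending is useless here, since $d_1^{(*_0)}=d_1^{(*)}$ makes the front face vanish). A direct computation, using right self‑distributivity to treat the faces $d_i^{(*)}$, should give
\[
\partial_{n+1}\Psi_n+\Psi_{n-1}\partial_n \;=\; T_n-|Q|\,\mathrm{Id},
\]
where $T\colon C_*^R(Q)\to C_*^R(Q)$ is the chain map $T(x_1,\dots,x_n)=\sum_{y\in Q}(x_1*y,\dots,x_n*y)$ ``sum of all right translations''; the term $|Q|\,\mathrm{Id}$ comes from the face $d^{(*_0)}_{n+1}$ that just deletes $y$, and $T$ from $d^{(*)}_{n+1}$, which lets $y$ act on the whole tuple and then deletes it. Thus $|Q|\,\mathrm{Id}\simeq T$ on $C_*^R(Q)$.

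\emph{Step 2 (locating the torsion).} Fix $x_0\in Q$. The constant map $r\colon Q\to\{x_0\}$ is a quandle homomorphism (idempotence), split by the inclusion $\iota$. Since $C_*^R(\{x_0\})$ has zero differential, $C_*^R(Q)=\widetilde C_*^R(Q)\oplus C_*^R(\{x_0\})$ as chain complexes, where $\widetilde C_*^R(Q)=\ker\varepsilon$ is the subcomplex of chains of augmentation $0$ (note $\varepsilon$ kills all boundaries, so $\widetilde C_*^R(Q)$ is indeed a subcomplex). Because the quasigroup axiom makes the right translations act transitively, $Q$ is right‑connected, so $\operatorname{rank}H_n^R(Q)=1$; hence $H_n(\widetilde C_*^R(Q))=\operatorname{Tor}H_n^R(Q)$, i.e. $\operatorname{Tor}H_n^R(Q)=\ker\bigl(r_*\colon H_n^R(Q)\to\mathbb{Z}\bigr)$.

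\emph{Step 3 (the crux: $T\simeq |Q|\cdot(\iota_\#\,r_\#)$ on $C_*^R(Q)$, equivalently $T$ is null‑homotopic on $\widetilde C_*^R(Q)$).} This is where the quasigroup property is indispensable: for each $a$ the \emph{left} translation $b\mapsto a*b$ is a bijection of $Q$, so I can reindex the sum defining $T$ by its first coordinate, $T(x_1,\dots,x_n)=\sum_{b\in Q}\bigl(b,\,x_2*(x_1\!\diagdown\! b),\,\dots,\,x_n*(x_1\!\diagdown\! b)\bigr)$, and then build — by induction on $n$, using the ``last coordinate'' recursion $\partial_n(\vec x)=(\partial_{n-1}(x_1,\dots,x_{n-1}),x_n)+(-1)^n\bigl((x_1,\dots,x_{n-1})-(x_1*x_n,\dots,x_{n-1}*x_n)\bigr)$ of the rack complex — a chain homotopy that successively replaces each entry $x_i*(x_1\!\diagdown\! b)$ by $b$, collapsing the $b$‑summand of $T$ to the constant tuple $(b,\dots,b)$; finally $\sum_b(b,\dots,b)$ differs from $|Q|\,(x_0,\dots,x_0)=|Q|\,\iota_\# r_\#$ by a boundary, since by connectedness a conjugating inner automorphism carrying $x_0$ to $b$ is chain homotopic to the identity, whence $(b,\dots,b)-(x_0,\dots,x_0)\in\operatorname{im}\partial$. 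I expect this step to be the main obstacle: any homotopy that ignores bijectivity of the left translations lands one back on the tautology $T_*=|Q|\,\mathrm{Id}_*=|Q|\,\mathrm{Id}_*$ on homology, so the construction must genuinely exploit that bijectivity to ``collapse'' a general tuple to a constant one, and getting the induction and its sign bookkeeping right is the real work.

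\emph{Step 4 (conclusion).} Combining Steps 1 and 3 gives $|Q|\,\mathrm{Id}\simeq T\simeq |Q|\,\iota_\# r_\#$ on $C_*^R(Q)$, hence $|Q|\cdot\mathrm{id}_{H_n^R(Q)}=|Q|\,\iota_* r_*$ on homology. Applied to a class $[z]\in\operatorname{Tor}H_n^R(Q)=\ker r_*$ (Step 2), this reads $|Q|[z]=|Q|\,\iota_*(r_*[z])=0$, which is exactly the assertion of the theorem.
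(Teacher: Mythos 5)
Your Steps 1, 2 and 4 are fine: the homotopy $\Psi_n(\mathbf{x})=(-1)^n\sum_{y\in Q}(x_1,\dots,x_n,y)$ does satisfy $\partial_{n+1}\Psi_n+\Psi_{n-1}\partial_n=T-|Q|\,\mathrm{Id}$, and the retraction to the one-point quandle together with $\operatorname{rank}H_n^R(Q)=1$ correctly identifies $\operatorname{Tor}H_n^R(Q)$ with $\ker r_*$. But observe that Step 1 uses no quasigroup hypothesis at all: it holds for every finite rack (it is essentially the known fact that the sum of the chain maps induced by the right translations $*_y$ is homotopic to $|Q|\,\mathrm{Id}$). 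So the entire content of the theorem sits in Step 3, and Step 3 is a genuine gap, as you yourself half-admit. The reindexing of $T$ by left division is legitimate, but the asserted homotopy that ``successively replaces each entry $x_i*(x_1\!\diagdown\! b)$ by $b$'' is never written down: the intermediate maps are not shown to be chain maps, and no mechanism is exhibited by which such an in-place replacement can be realized as $\partial s+s\partial$. A formal induction on $n$ does not rescue it, because at each stage one must know that a specific augmentation-zero cycle is a boundary, and augmentation-zero cycles fail to bound precisely when there is torsion --- that is, the inductive step begs the question being proved. (Also, the auxiliary claim $(b,\dots,b)-(x_0,\dots,x_0)\in\operatorname{im}\partial$ needs the standard but unproved fact that inner automorphisms act trivially on rack homology; that part is minor.) Remark \ref{Remark 2.3} shows the missing step cannot be soft: $QS(6)$ is connected but not a quasigroup and has $\mathbb{Z}_{24}$ in $H_3^Q$, so bijectivity of left translations must enter quantitatively, not merely through connectedness.

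For comparison, the paper fills exactly the hole your Step 3 leaves, by a different mechanism: instead of modifying existing entries, it inserts an extra summed dummy coordinate. The explicit homotopies $D_n^j(\mathbf{x})=\sum_{y\in Q}(x_j,\dots,x_j,y,x_{j+1},\dots,x_n)$ and $F_n^j(\mathbf{x})=\sum_{y\in Q}(x_j,\dots,x_j,y,x_j,\dots,x_n)$ interpolate in $2n$ elementary steps, through the maps $f_r^j$ and $f_s^j$, between $|Q|\,\mathrm{Id}$ and the constant map $\mathbf{x}\mapsto\sum_{y\in Q}(y,\dots,y)$; the quasigroup property is used only through the identity $\sum_{y\in Q}x*y=\sum_{y\in Q}y$ applied to the face that acts by, or deletes, the inserted coordinate. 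If you want to complete your outline you need formulas of this explicit kind in Step 3; once you have them, your Step 1 becomes redundant (it is subsumed by the first link of the paper's chain of homotopies), and Steps 2 and 4 can be replaced by the paper's one-line observation that the free part of $H_n^R(Q)$ is $\mathbb{Z}$ generated by the class of $(y,\dots,y)$.
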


\begin{proof}
We consider two chain maps, $f_{r}^{j}$ and $f_{s}^{j}$ defined as below.\\
Let ${\bf x}=(x_{1},\ldots, x_{n}) \in Q^{n}.$ Then we define the (repeater) chain map $f_{r}^{j}:C_{n}^{R}(Q) \rightarrow C_{n}^{R}(Q)$ by
$$f_{r}^{j}({\bf x})=|Q|(x_{j},\ldots,x_{j},x_{j+1},\ldots,x_{n}) \hbox{ for } 1 \leq j \leq n,$$

and define the (symmetrizer) chain map $f_{s}^{j}:C_{n}^{R}(Q) \rightarrow C_{n}^{R}(Q)$ by
$$f_{s}^{j}({\bf x})=\sum\limits_{y \in Q}(y,\ldots,y,x_{j+1},\ldots,x_{n}) \hbox{ for } 0 \leq j \leq n.$$

We first prove that $f_{r}^{j}$ is chain homotopic to $f_{s}^{j}$ for $1 \leq j \leq n$, by using the following chain homotopy:
$$D_{n}^{j}({\bf x})=\sum\limits_{y \in Q}(x_{j},\ldots,x_{j},y,x_{j+1},\ldots,x_{n}) \hbox{ for } 1 \leq j \leq n.$$

If $i \leq j$, then
$$d_{i}^{(\ast)}D_{n}^{j}({\bf x})=\sum\limits_{y \in Q}(x_{j},\ldots,x_{j},y,x_{j+1},\ldots,x_{n})$$
so that the formula above does not depend on the binary operation $*$, in particular $(d_{i}^{(\ast)}-d_{i}^{(\ast_{0})})D_{n}^{j}=0.$\\

If $i = j+1$, then since $Q$ satisfies the quasigroup property, we have $\sum\limits_{y \in Q}x_{j}*y=\sum\limits_{y \in Q}y$ and therefore
$$d_{i}^{(\ast)}D_{n}^{j}({\bf x})=\sum\limits_{y \in Q}(y,\ldots,y,x_{j+1},\ldots,x_{n})=f_{s}^{j}({\bf x}),$$

by the definition of $d_{i}^{(\ast_{0})}$, we have
$$d_{i}^{(\ast_{0})}D_{n}^{j}({\bf x})=|Q|(x_{j},\ldots,x_{j},x_{j+1},\ldots,x_{n})=f_{r}^{j}({\bf x}).$$

Finally, if $j+2 \leq i \leq n+1$, then by invertibility condition of a quandle we have $\sum\limits_{y \in Q}y*x_{i-1}=\sum\limits_{y \in Q}y$ and therefore
$$d_{i}^{(\ast)}D_{n}^{j}({\bf x})=\sum\limits_{y \in Q}(x_{j} \ast x_{i-1},\ldots,x_{j} \ast x_{i-1},y,x_{j+1} \ast x_{i-1},\ldots,x_{i-2} \ast x_{i-1},x_{i},\ldots,x_{n}).$$

On the other hand, if $i \leq j$, then
$$D_{n-1}^{j}d_{i}^{(\ast)}({\bf x})=\sum\limits_{y \in Q}(x_{j+1},\ldots,x_{j+1},y,x_{j+2},\ldots,x_{n}),$$
thus this formula does not depend on the binary operation $*$, and $D_{n-1}^{j}(d_{i}^{(\ast)}-d_{i}^{(\ast_{0})})=0.$\\

If $j+1 \leq i$, then
$$D_{n-1}^{j}d_{i}^{(\ast)}({\bf x})=\sum\limits_{y \in Q}(x_{j} \ast x_{i},\ldots,x_{j} \ast x_{i},y,x_{j+1} \ast x_{i},\ldots,x_{i-1} \ast x_{i},x_{i+1},\ldots,x_{n}).$$
Note that $d_{i+1}^{(\ast)}D_{n}^{j}=D_{n-1}^{j}d_{i}^{(\ast)}$ and $d_{i+1}^{(\ast_{0})}D_{n}^{j}=D_{n-1}^{j}d_{i}^{(\ast_{0})}$ if $j+1 \leq i \leq n.$\\
Therefore, we have the equality
$$\partial_{n+1}D_{n}^{j}({\bf x})+D_{n-1}^{j}\partial_{n}({\bf x})=(-1)^{j}(f_{s}^{j}({\bf x})-f_{r}^{j}({\bf x})).$$

We next will prove that $f_{s}^{j-1}$ is chain homotopic to $f_{r}^{j}$ for $1 \leq j \leq n$, and the chain homotopy is given by the formula:
$$F_{n}^{j}({\bf x})=\sum\limits_{y \in Q}(x_{j},\ldots,x_{j},y,x_{j},\ldots,x_{n}) \hbox{ for } 1 \leq j \leq n.$$

If $i \leq j-1$, then
$$d_{i}^{(\ast)}F_{n}^{j}({\bf x})=\sum\limits_{y \in Q}(x_{j},\ldots,x_{j},y,x_{j},\ldots,x_{n})$$
so this formula does not depend on the binary operation $*$, in particular $(d_{i}^{(\ast)}-d_{i}^{(\ast_{0})})F_{n}^{j}=0.$\\

If $i = j$, then since $Q$ satisfies the quasigroup property, we have $\sum\limits_{y \in Q}x_{j}*y=\sum\limits_{y \in Q}y$ and therefore
$$d_{i}^{(\ast)}F_{n}^{j}({\bf x})=\sum\limits_{y \in Q}(y,\ldots,y,x_{j},\ldots,x_{n})=f_{s}^{j-1}({\bf x}),$$

by the definition of $d_{i}^{(\ast_{0})}$, we have
$$d_{i}^{(\ast_{0})}F_{n}^{j}({\bf x})=|Q|(x_{j},\ldots,x_{j},x_{j+1},\ldots,x_{n})=f_{r}^{j}({\bf x}).$$

If $i=j+1$, then $(d_{i}^{(\ast)}-d_{i}^{(\ast_{0})})F_{n}^{j}=0.$

Finally, if $j+2 \leq i \leq n+1$, then by invertibility condition of a quandle we have $\sum\limits_{y \in Q}y*x_{i-1}=\sum\limits_{y \in Q}y$ and therefore
$$d_{i}^{(\ast)}F_{n}^{j}({\bf x})=\sum\limits_{y \in Q}(x_{j} \ast x_{i-1},\ldots,x_{j} \ast x_{i-1},y,x_{j} \ast x_{i-1},\ldots,x_{i-2} \ast x_{i-1},x_{i},\ldots,x_{n}).$$

On the other hand, if $i \leq j$, then
$$F_{n-1}^{j}d_{i}^{(\ast)}({\bf x})=\sum\limits_{y \in Q}(x_{j+1},\ldots,x_{j+1},y,x_{j+1},\ldots,x_{n}),$$
so that the formula above does not depend on the binary operation $*$, and $F_{n-1}^{j}(d_{i}^{(\ast)}-d_{i}^{(\ast_{0})})=0.$\\

If $j+1 \leq i$, then
$$F_{n-1}^{j}d_{i}^{(\ast)}({\bf x})=\sum\limits_{y \in Q}(x_{j} \ast x_{i},\ldots,x_{j} \ast x_{i},y,x_{j} \ast x_{i},\ldots,x_{i-1} \ast x_{i},x_{i+1},\ldots,x_{n}).$$
Notice that $d_{i+1}^{(\ast)}F_{n}^{j}=F_{n-1}^{j}d_{i}^{(\ast)}$ and $d_{i+1}^{(\ast_{0})}F_{n}^{j}=F_{n-1}^{j}d_{i}^{(\ast_{0})}$ if $j+1 \leq i \leq n.$\\

Hence we have the following equality
$$\partial_{n+1}F_{n}^{j}({\bf x})+F_{n-1}^{j}\partial_{n}({\bf x})=(-1)^{j}(f_{r}^{j}({\bf x})-f_{s}^{j-1}({\bf x})).$$

Then from the above, we obtain a sequence of chain homotopic chain maps,
$$|Q|\textrm{Id} = f_{s}^{0} = f_{r}^{1} \simeq f_{s}^{1} \simeq f_{r}^{2} \simeq \ldots \simeq f_{r}^{n} \simeq f_{s}^{n},$$
where $f_{s}^{n}({\bf x}) = \sum\limits_{y \in Q}(y,\ldots,y).$
Then, on homology level, we have the same induced homomorphisms $|Q|\textrm{Id} = (f_{s}^{n})_{\ast}:H_{n}^{R}(Q) \rightarrow H_{n}^{R}(Q).$
Recall that free$(H_{n}^{R}(Q))$ is $\mathbb{Z}$ and it is generated by $(y,\ldots,y)$ for $y \in Q.$ Therefore $|Q|\textrm{tor}(H_{n}^{R}(Q))=0.$
\end{proof}

\begin{corollary}\label{Corollary 2.2}
We summarize identities observed in the proof:
\begin{enumerate}
\item[(1')] For a fixed $j$  all expressions $d_i^{(*_0)}D^j_n$, $d_i^{(*)}D^j_n$, $i\leq j$, $D^{j-1}_{n-1}d_i^{(*_0)}$, $D^{j-1}_{n-1}d_i^{(*)}$, $i < j$, $d^{(*_0)}_{j+1}F_n^{j}$, and $d^{(*)}_{j+1}F_n^{j}$
are equal to\\ $\sum\limits_{y \in Q}(x_j,...,x_j,y,x_{j+1},...,x_n),$
\item[(1")] For a fixed $j$  all expressions $d_i^{(*_0)}F^j_n$, $d_i^{(*)}F^j_n$, $F^{j-1}_{n-1}d_i^{(*_0)}$, $F^{j-1}_{n-1}d_i^{(*)}$, $i < j$, are equal to $\sum\limits_{y \in Q}(x_j,...,x_j,y,x_j,x_{j+1},...,x_n),$

\item[(2)] $d_{i+1}^{(*)}D^i_n=d_{i+1}^{(*)}F^{i+1}_n$ and $d_{i+1}^{(*_0)}D^i_n=d_i^{(*_0)}F^i_n,$
\item[(3)] For $j+1 \leq i \leq n$ we have:\\
$d_{i+1}^{(\ast)}D_{n}^{j}=D_{n-1}^{j}d_{i}^{(\ast)}$ and $d_{i+1}^{(\ast_{0})}D_{n}^{j}=D_{n-1}^{j}d_{i}^{(\ast_{0})},$ \\
$d_{i+1}^{(\ast)}F_{n}^{j}=F_{n-1}^{j}d_{i}^{(\ast)}$ and $d_{i+1}^{(\ast_{0})}F_{n}^{j}=F_{n-1}^{j}d_{i}^{(\ast_{0})}$.
\end{enumerate}
In Section \ref{Section 3}, we define a precubic homotopy, the notion motivated by the above conditions.
\end{corollary}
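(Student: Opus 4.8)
The plan is to verify the four groups of identities by substituting the explicit coordinate descriptions of $D_n^j$ and $F_n^j$ into the face maps and simplifying, drawing only on the three algebraic facts already isolated in the proof of Theorem \ref{1}: the idempotency $x_j*x_j=x_j$; the quasigroup property in the reindexing form $\sum_{y\in Q}(a*y)=\sum_{y\in Q}y$ for each fixed $a$ (since $y\mapsto a*y$ is a bijection of $Q$); and rack invertibility in the form $\sum_{y\in Q}(y*b)=\sum_{y\in Q}y$ (since $y\mapsto y*b$ is a bijection of $Q$). No new idea is required: Corollary \ref{Corollary 2.2} only records, in a single normalized shape, the computations already performed while proving Theorem \ref{1}, so the task is careful bookkeeping with these three tools.

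For (1') and (1") I would first write out that $D_n^j({\bf x})=\sum_{y\in Q}(x_j,\dots,x_j,y,x_{j+1},\dots,x_n)$ consists of a block of $x_j$'s, then the inserted coordinate $y$, then the untouched tail, while $F_n^j({\bf x})=\sum_{y\in Q}(x_j,\dots,x_j,y,x_j,x_{j+1},\dots,x_n)$ differs only by one extra copy of $x_j$ reinserted just after $y$. Then I would track, for each $(i,j)$, which coordinate the face map removes. For $i\le j$ (respectively $i<j$) the deleted coordinate lies inside or just before the block of $x_j$'s, so $d_i^{(*_0)}$ merely drops one copy, and $d_i^{(*)}$ additionally replaces the earlier $x_j$'s by $x_j*x_j=x_j$; idempotency makes the result independent of $*$ and produces the stated normal form. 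The only expression in these two lists that multiplies the coordinate $y$ is $d_{j+1}^{(*)}F_n^j$, where $y$ is replaced by $y*x_j$; here one reindexes the sum over $Q$ by $y\mapsto y*x_j$ to recover $\sum_{y\in Q}(x_j,\dots,x_j,y,x_{j+1},\dots,x_n)$. A structural remark streamlines the expressions of the form $D^{j-1}_{n-1}d_i$ and $F^{j-1}_{n-1}d_i$: since $D$ and $F$ overwrite all coordinates before the inserted $y$ by copies of a single entry, any $*$-modification made to those coordinates by the preceding $d_i^{(*)}$ is erased, which is exactly why these expressions do not depend on $*$.

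For (2), the identity $d_{i+1}^{(*_0)}D_n^i=d_i^{(*_0)}F_n^i$ is immediate: both sides delete the coordinate $y$ and collapse the sum to $|Q|(x_i,\dots,x_i,x_{i+1},\dots,x_n)$. For $d_{i+1}^{(*)}D_n^i=d_{i+1}^{(*)}F_n^{i+1}$ the two sides equal $\sum_{y\in Q}(x_i*y,\dots,x_i*y,x_{i+1},\dots,x_n)$ and $\sum_{y\in Q}(x_{i+1}*y,\dots,x_{i+1}*y,x_{i+1},\dots,x_n)$, which agree after reindexing by $y\mapsto x_i*y$ and $y\mapsto x_{i+1}*y$ respectively, using the quasigroup bijections. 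For (3) I would observe that when $j+1\le i\le n$ the face map acts only on coordinates strictly to the right of the inserted $y$ and the $x_j$-block, so inserting $y$ (the effect of $D$ or $F$) and deleting or modifying a later coordinate (the effect of $d_i$) touch disjoint parts of the tuple and commute up to the index shift $i\mapsto i+1$; this is precisely the relation already flagged inside the proof of Theorem \ref{1}.

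The only genuine hazard is index management: keeping straight the off-by-one shifts between $D_n^j$ and $D^{j-1}_{n-1}$ (and between $F_n^j$ and $F^{j-1}_{n-1}$), and determining for each pair $(i,j)$ whether the face map lands on a repeated $x_j$, on the inserted coordinate $y$, on the reinserted $x_j$ of $F$, or on the tail. Once the coordinate descriptions are written out explicitly the verification becomes mechanical, and each of the three algebraic facts is invoked exactly where a coordinate equal to $y$ is multiplied (calling for a reindexing) or where a block of equal entries is multiplied by one of its own members (calling for idempotency).
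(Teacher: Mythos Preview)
Your proposal is correct and matches the paper's approach exactly: Corollary~\ref{Corollary 2.2} carries no separate proof in the paper, being just a catalog of the identities already verified line by line in the proof of Theorem~\ref{1} via the same three tools (idempotency, quasigroup reindexing, rack invertibility) that you list. One minor caution on your description of (3): the face map $d_{i+1}^{(*)}$ does not leave the $x_j$-block and $y$ untouched but multiplies them by $x_i$; the identity still holds because on the block this produces $x_j*x_i$, matching $D^j_{n-1}d_i^{(*)}$, and on $y$ it is absorbed by the reindexing $y\mapsto y*x_i$, exactly as your final paragraph anticipates.
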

\begin{corollary}
The reduced quandle homology\footnote{Reduced homology is obtained from augmented chain
complex $\cdots \rightarrow C_{1} \stackrel{\partial_1}{\rightarrow} \mathbb{Z}$ where $\partial_{1}(x)=1$;
the reduced quandle homology of a connected quandle has trivial free part(compare Theorem \ref{Theorem 1.4}(2)).}
of a finite quasigroup quandle is annihilated by its order, i.e. $|Q|H_{n}^{Q}(Q)=0$.
\end{corollary}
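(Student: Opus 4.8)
The plan is to derive this from Theorem \ref{1} by transferring the bound from rack homology to quandle homology, using two auxiliary inputs: the rank formula of Theorem \ref{Theorem 1.4}(2) and the splitting of the rack chain complex of a quandle.

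First I would record that a finite quasigroup quandle $Q$ is connected. Indeed, for $a,b\in Q$ the quasigroup axiom supplies $x$ with $a*x=b$, i.e. $*_x(a)=b$, so any two elements of $Q$ lie in a single orbit under the right-multiplication action and $|\mathcal{O}|=1$. By Theorem \ref{Theorem 1.4}(2) this gives $\mathrm{rank}\,H_n^Q(Q)=|\mathcal{O}|(|\mathcal{O}|-1)^{n-1}=0$ for every $n\geq 2$, so in those degrees $H_n^Q(Q)$ is a torsion group, and it coincides with $\widetilde{H}_n^Q(Q)$ since passing to the augmented complex $\cdots\to C_1^Q(Q)\to\mathbb{Z}$ only affects degrees $0$ and $1$. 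In degrees $0$ and $1$ a direct computation with that augmented complex --- using $\partial_2(x_1,x_2)=(x_1)-(x_1*x_2)$ together with connectedness --- shows $\widetilde{H}_0^Q(Q)=\widetilde{H}_1^Q(Q)=0$. (Here the use of the reduced theory is genuinely necessary: $H_1^Q(Q)\cong\mathbb{Z}$ is not annihilated by $|Q|$.)

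It then remains to show $|Q|\,\mathrm{tor}\,H_n^Q(Q)=0$ for $n\geq 2$. For this I would invoke the fact that, for a quandle, the degenerate subcomplex $C_*^D(Q)$ is a direct summand of the rack chain complex $C_*^R(Q)$ (Litherland--Nelson \cite{L-N}), so that $H_n^R(Q)\cong H_n^Q(Q)\oplus H_n^D(Q)$ for every $n$. Taking torsion subgroups, $\mathrm{tor}\,H_n^Q(Q)$ is then a direct summand of $\mathrm{tor}\,H_n^R(Q)$, which is annihilated by $|Q|$ by Theorem \ref{1}. Hence $|Q|\,\mathrm{tor}\,H_n^Q(Q)=0$, and combining with the previous paragraph, $|Q|\,\widetilde{H}_n^Q(Q)=0$ for all $n$.

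The one point requiring care --- and what I expect to be the real content beyond Theorem \ref{1} --- is the passage from rack to quandle homology. The chain homotopies $D_n^j$ and $F_n^j$ built in the proof of Theorem \ref{1} do not all descend to $C_*^Q(Q)=C_*^R(Q)/C_*^D(Q)$; in particular $D_n^1$ and $F_n^2$ fail to preserve the degenerate subcomplex, so the chain of homotopies breaks on passing to the quotient and the argument of Theorem \ref{1} does not transfer verbatim. Routing through the splitting $H_n^R(Q)\cong H_n^Q(Q)\oplus H_n^D(Q)$ avoids this cleanly; a self-contained alternative would be to correct $D_n^1$ and $F_n^2$ by appropriate degenerate terms --- in effect the precubic-homotopy mechanism developed in Section \ref{Section 3} --- but that is more machinery than this corollary requires.
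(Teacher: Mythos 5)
Your argument is correct and is essentially the paper's own proof: the splitting $H_n^R(Q)\cong H_n^D(Q)\oplus H_n^Q(Q)$ from \cite{L-N} combined with Theorem \ref{1} handles the torsion, and connectedness together with Theorem \ref{Theorem 1.4}(2) shows the reduced quandle homology is all torsion (you merely spell out degrees $0$ and $1$, which the paper leaves implicit). One small slip in your closing aside: $F_n^2$ does preserve the degenerate subcomplex (its image consists of tuples beginning $x_2,x_2,\ldots$, hence lies in $C_*^D(Q)$); only $D_n^1$ genuinely fails to descend, but since your proof routes through the splitting this does not affect anything.
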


\begin{proof}
The homology of a quandle splits into degenerate and quandle parts, i.e. $H_{n}^{R}(Q)=H_{n}^{D}(Q) \oplus H_{n}^{Q}(Q) $ (see \cite{L-N}), therefore, by Theorem \ref{1}, $|Q|$ annihilates the torsion of $H_{n}^{D}(Q)$ and $H_{n}^{Q}(Q)$. Furthermore, since $Q$ is a finite connected quandle, $\textrm{rank}(H_{n}^{Q}(Q))=0$ for $n > 1$ and $\textrm{rank}(H_{1}^{Q}(Q))=1$. Therefore the reduced quandle homology of $Q$ is a torsion group annihilated by $|Q|$.
\end{proof}

\begin{remark}\label{Remark 2.3}
\normalfont
We recall that if a quandle $Q$ has the quasigroup property, then it is a connected quandle. But the converse does not hold. For example, the $6$-elements quandle $QS(6)$ (see \cite{CKS-1} and \cite{CKS-2}) is a connected quandle but not a quasigroup quandle. This example also shows that Theorem \ref{1} does not hold when we replace the condition ``quasigroup" with ``connected" in the theorem, because \\ $H_{3}^{Q}(QS(6))=\mathbb{Z}_{24}$, see \cite{CKS-1}.
\end{remark}

\section{Presimplicial and precubic homotopy}\label{Section 3}

We can express our computation from Section \ref{Section 2} in the language of precubic homotopy between $f_{s}^{0}$ and $f_{s}^{n}$.
We start from the definitions in \cite{E-Z, Lod, BHS, Prz-2}.

\begin{definition}\cite{E-Z, Lod}\label{Presimplicial module}
A presimplicial module $\C$ is a collection of modules $C_{n}$, $n \geq 0$, together with maps called face maps or face operators,
$$d_{i}: C_{n} \to C_{n-1},~ i=0,...,n$$
such that
$$d_{i}d_{j} = d_{j-1}d_{i} \mbox{ for } 0 \leq i<j \leq n.$$
\end{definition}

\begin{definition}\cite{BHS, Prz-2}\label{Precubic module}
A precubic module $\C^{'}$ is a collection of modules $C_{n}^{'}$, $n \geq 1$, and face operators,
$$d_{i}^{0},d_{i}^{1}: C_{n}^{'} \to C_{n-1}^{'}, i=1,...,n$$
satisfying
$$d_{i}^{\varepsilon}d_{j}^{\delta} = d_{j-1}^{\delta}d_{i}^{\varepsilon} \mbox{ for } \varepsilon,\delta = 0,1 ~ and ~ 1 \leq i<j \leq n.$$\\
\end{definition}

Note that $(\mathbb{Z}X^n, d_i^{(*_{0})})$, $(\mathbb{Z}X^n, d_i^{(*)})$, and $(\mathbb{Z}X^n, d_i^{(*_{0})} - d_i^{(*)})$ are shifted presimplicial modules
(to get presimplicial modules we could take $\mathbb{Z}X^{n+1}$ in place of $\mathbb{Z}X^{n}$).
 $(\mathbb{Z}X^n, d_i^{(*_{0})}, d_i^{(*)})$ is a precubic module \cite{FRS-1,FRS-2}.

\begin{lemma}\label{Loday Lemma 1.0.7}
$(i)$ Let $\partial=\sum\limits_{i = 0}^{n}(-1)^{i}d_{i}$, then $\partial\partial=0$. In other words $(\C_{\ast},\partial)$ is a chain complex,\\
$(ii)$ Let $\partial^{'}=\sum\limits_{i = 1}^{n}(-1)^{i}(d_{i}^{\varepsilon}-d_{i}^{\delta}) \mbox{ for } \varepsilon,\delta = 0,1$, then $\partial^{'}\partial^{'}=0$. In other words $(\C_{\ast}^{'},\partial^{'})$ is a chain complex.
\end{lemma}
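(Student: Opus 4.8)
The plan is to prove $(i)$ by the classical reindexing argument for presimplicial (semisimplicial) complexes, and then to obtain $(ii)$ from $(i)$ by exhibiting $\partial'$ as a difference of two anticommuting differentials.

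For $(i)$, I would write
$$\partial\partial=\sum_{i=0}^{n-1}\sum_{j=0}^{n}(-1)^{i+j}d_id_j=\sum_{i\geq j}(-1)^{i+j}d_id_j+\sum_{i<j}(-1)^{i+j}d_id_j,$$
and in the second sum substitute the presimplicial relation $d_id_j=d_{j-1}d_i$ (valid for $i<j$). Setting $p=j-1$, $q=i$ — so that $q\leq p$ — turns that sum into $-\sum_{q\leq p}(-1)^{p+q}d_pd_q$, which is exactly the negative of the first sum; hence $\partial\partial=0$. The only thing to watch is the sign $(-1)^{(j-1)+i}=-(-1)^{i+j}$ produced by the shift $j\mapsto j-1$.

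For $(ii)$ (the case $\varepsilon=\delta$ being trivial, since then $\partial'=0$), write $\partial^{\varepsilon}=\sum_{i=1}^{n}(-1)^{i}d_i^{\varepsilon}$, so that $\partial'=\pm(\partial^{0}-\partial^{1})$ and therefore
$$\partial'\partial'=(\partial^{0})^{2}-\bigl(\partial^{0}\partial^{1}+\partial^{1}\partial^{0}\bigr)+(\partial^{1})^{2}.$$
The precubic relation with $\delta=\varepsilon$ gives $d_i^{\varepsilon}d_j^{\varepsilon}=d_{j-1}^{\varepsilon}d_i^{\varepsilon}$ for $i<j$, so each $(C_{\ast}',d_i^{\varepsilon})$ is an (index-shifted) presimplicial module and part $(i)$ yields $(\partial^{0})^{2}=(\partial^{1})^{2}=0$; the fact that the face indices run from $1$ rather than from $0$ is irrelevant to that argument. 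It then remains to prove that $\partial^{0}$ and $\partial^{1}$ anticommute, i.e. $\sum_{i,j}(-1)^{i+j}\bigl(d_i^{0}d_j^{1}+d_i^{1}d_j^{0}\bigr)=0$. Splitting both double sums into their $i<j$ and $i\geq j$ parts and applying the mixed relations $d_i^{0}d_j^{1}=d_{j-1}^{1}d_i^{0}$ and $d_i^{1}d_j^{0}=d_{j-1}^{0}d_i^{1}$ to the $i<j$ parts, the same shift-and-reindex as in $(i)$ shows that the $i<j$ part of each sum equals minus the $i\geq j$ part of the other, so the total vanishes. Hence $\partial'\partial'=0$.

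There is no genuine obstacle here; the statement is a bookkeeping exercise, and the content of $(ii)$ reduces cleanly to $(i)$ together with the anticommutativity of $\partial^{0}$ and $\partial^{1}$. The two small points worth stating explicitly are that the proof of $(i)$ never used the presence of the face map $d_0$ (so it applies verbatim to the shifted presimplicial structures arising in $(ii)$), and that only the relations with strict inequality $i<j$ are ever invoked — the $i\geq j$ terms are paired off by pure reindexing.
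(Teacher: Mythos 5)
Your proof is correct; both reindexing arguments are sound, the sign bookkeeping $(-1)^{(j-1)+i}=-(-1)^{i+j}$ is right, and the index ranges match after the shift $p=j-1$, $q=i$ (including in the precubic case, where the indices start at $1$ — as you note, the argument for $(i)$ never uses $d_0$). For comparison: the paper offers no proof of this lemma at all, treating it as the standard fact going back to Eilenberg--Zilber and Loday (Lemma~1.0.7 of \cite{Lod}), so there is nothing to match line by line. Your route to $(ii)$ — writing $\partial'=\pm(\partial^{0}-\partial^{1})$, invoking $(i)$ for $(\partial^{0})^{2}=(\partial^{1})^{2}=0$, and checking that $\partial^{0}$ and $\partial^{1}$ anticommute via the mixed relations — is correct but slightly longer than necessary. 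A shortcut, implicit in the paper's remark just before the lemma that $(\mathbb{Z}X^{n}, d_i^{(*_0)}-d_i^{(*)})$ is a shifted presimplicial module, is to observe that the difference operators $D_i=d_i^{\varepsilon}-d_i^{\delta}$ themselves satisfy $D_iD_j=D_{j-1}D_i$ for $i<j$ (expand and apply the precubic relation to each of the four terms), so that $(ii)$ is literally an instance of $(i)$ with no separate anticommutation step. What your decomposition buys instead is the explicit statement that each single-$\varepsilon$ differential squares to zero and that the two anticommute, which is a mildly stronger package of identities than $(ii)$ alone; either way the lemma is established.
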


A map of presimplicial modules $f:C \rightarrow \widetilde{C}$ is a collection of maps $f_{n}:C_{n} \rightarrow \widetilde{C_{n}}$ such that $f_{n-1}\circ d_{i}=\widetilde{d_{i}}\circ f_{n}$. This implies that $f_{n-1}\circ \partial=\widetilde{\partial} \circ f_{n}$ and so this induces a map of complexes $f:C_{*} \rightarrow \widetilde{C_{*}}$. On homology level, the induced map is denoted as $f_{*}:H_{*}(C_{*}) \rightarrow H_{*}(\widetilde{C_{*}})$.

\begin{definition}\cite{Lod}\label{Presimplicial Homotopy}
A presimplicial homotopy  $h$ between two presimplicial maps $f$ and $g:\C \to \widetilde{\C}$ is a collection
of maps $h_i: C_n \to \widetilde{C_{n+1}}$, $i=0,...,n$ such that
\begin{enumerate}
  \item $d_{i}h_{j} = h_{j-1}d_{i} \mbox{ for } i<j,$
  \item $d_{i}h_{i} = d_{i}h_{i-1} \mbox{ for } i \leq n \mbox{ (the case $i=j$ and $i=j+1$), } $
  \item $d_{i}h_{j} = h_{j}d_{i-1} \mbox{ for } i>j+1, $
  \item $d_{0}h_{0}=f \mbox{ and  } d_{n+1}h_{n}=g.$
\end{enumerate}
\end{definition}

A map of precubic modules $f^{'}:C^{'} \rightarrow \widetilde{C}^{'}$ is a collection of maps $f_{n}^{'}:C_{n}^{'} \rightarrow \widetilde{C_{n}^{'}}$ such that $f_{n-1}^{'}\circ d_{i}^{\varepsilon}=\widetilde{d_{i}^{\varepsilon}}\circ f_{n}^{'}$. This implies that $f_{n-1}^{'} \circ \partial^{'} =\widetilde{\partial}^{'} \circ f_{n}^{'}$ and so this induces a map of complexes $f^{'}:C_{*}^{'} \rightarrow \widetilde{C_{*}^{'}}$.
On homology level, the induced map is denoted as $f_{*}^{'}:H_{*}(C_{*}^{'}) \rightarrow H_{*}(\widetilde{C_{*}^{'}})$.
In analogy to the presimplicial homotopy, we define a precubic homotopy. We are motivated by properties listed in the Corollary \ref{Corollary 2.2}.

\begin{definition}\label{Cubic Homotopy}
A precubic homotopy $h^{'}$ between two precubic maps $f^{'}$ and $g^{'}:\C^{'} \to \widetilde{\C}^{'}$ is a collection
of maps $h_{i}^{\varepsilon}: C_{n}^{'} \to \widetilde{C_{n+1}^{'}} \mbox{ for } \varepsilon = 0,1$ and $i=1,...,n$ such that
\begin{enumerate}
  \item $d_{i}^{\delta}h_{j}^{\varepsilon} = h_{j-1}^{\varepsilon}d_{i}^{\delta} \mbox{ for } i<j,$
  \item $d_{i}^{\varepsilon}h_{i}^{0} = d_{i+1}^{\varepsilon}h_{i}^{1},~ d_{i}^{0}h_{i}^{1} = d_{i+1}^{0}h_{i}^{0},~ d_{i}^{1}h_{i}^{1} = d_{i}^{1}h_{i-1}^{0},$
  \item $d_{i}^{\delta}h_{j}^{\varepsilon} = h_{j}^{\varepsilon}d_{i-1}^{\delta} \mbox{ for } i>j+1, $
  \item $d_{1}^{1}h_{1}^{1}=f^{'} \mbox{ and } d_{n+1}^{1}h_{n}^{0}=g^{'} \mbox{ where } \varepsilon, \delta = 0,1.$
\end{enumerate}
\end{definition}

\begin{lemma}\text{\emph{(Lemma 1.0.9 \cite{Lod})}}\label{Loday Lemma 1.0.9}
If $h$ is a presimplicial homotopy from $f$ to  $g$, then $H_{n}=\sum\limits_{i=0}^n(-1)^ih_i$ is a chain homotopy from $f$ to $g$ and therefore $f_*=g_*$.
\end{lemma}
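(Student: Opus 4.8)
The plan is to verify directly that $H_{n}=\sum_{i=0}^{n}(-1)^{i}h_{i}$ satisfies the chain homotopy identity
\[
\partial_{n+1}H_{n}+H_{n-1}\partial_{n}=f-g ,
\]
where throughout $\partial=\sum_{i}(-1)^{i}d_{i}$ (the overall sign plays no role in the conclusion $f_{*}=g_{*}$). Expanding the two composites yields the double sums $\partial_{n+1}H_{n}=\sum_{i=0}^{n}\sum_{j=0}^{n+1}(-1)^{i+j}d_{j}h_{i}$ and $H_{n-1}\partial_{n}=\sum_{i=0}^{n-1}\sum_{j=0}^{n}(-1)^{i+j}h_{i}d_{j}$, and the whole proof is just a bookkeeping of how the relations (1)--(4) of Definition \ref{Presimplicial Homotopy} collapse the first sum onto the second plus two boundary terms.

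First I would split $\partial_{n+1}H_{n}$ into four pieces according to the position of $j$ relative to $i$: the range $j<i$, the two diagonal cases $j=i$ and $j=i+1$, and the range $j>i+1$. On $\{j<i\}$ I apply relation (1), $d_{j}h_{i}=h_{i-1}d_{j}$; on $\{j>i+1\}$ I apply relation (3), $d_{j}h_{i}=h_{i}d_{j-1}$. After the reindexings $i\mapsto i+1$ and $j\mapsto j+1$ respectively, these two pieces become $-\sum_{0\le j\le i\le n-1}(-1)^{i+j}h_{i}d_{j}$ and $-\sum_{0\le i<j\le n}(-1)^{i+j}h_{i}d_{j}$, whose sum is precisely $-H_{n-1}\partial_{n}$. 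Hence all of $H_{n-1}\partial_{n}$ cancels, leaving only the diagonal contribution $\sum_{i=0}^{n}d_{i}h_{i}-\sum_{i=0}^{n}d_{i+1}h_{i}$.

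Next I would collapse this diagonal sum using relation (2), $d_{i}h_{i}=d_{i}h_{i-1}$ (valid for $1\le i\le n$): rewriting $\sum_{i=1}^{n}d_{i}h_{i}=\sum_{i=1}^{n}d_{i}h_{i-1}=\sum_{m=0}^{n-1}d_{m+1}h_{m}$ cancels the term $\sum_{i=0}^{n-1}d_{i+1}h_{i}$, and what remains is exactly $d_{0}h_{0}-d_{n+1}h_{n}$. By relation (4) these are $f$ and $g$, which establishes the homotopy identity; passing to homology kills the $\partial$-term and gives $f_{*}=g_{*}$.

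The only delicate step is the index bookkeeping — one must check that the extreme indices ($i=0$, $i=n$) and the outer face operators ($d_{0}$ and $d_{n+1}$) are each used exactly once, and that relation (2) is invoked only in its valid range $1\le i\le n$ so that the boundary terms $d_{0}h_{0}$ and $d_{n+1}h_{n}$ genuinely survive rather than telescoping away. There is no conceptual obstacle: this is the classical computation of Loday, and the precubic analogue (Definition \ref{Cubic Homotopy}), which underlies the calculation of Section \ref{Section 2}, will be proved by the same template, merely carrying the superscript $\varepsilon$ through and using the split relations in place of (1)--(4).
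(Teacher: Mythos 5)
Your verification is correct, and it is exactly the standard telescoping computation: the paper itself only cites Loday (Lemma 1.0.9) for this statement, and its own proof of the precubic analogue (Lemma \ref{Precubic Homotopy}) follows the same scheme you use — relations (1) and (3) cancel $H_{n-1}\partial_{n}$, relation (2) telescopes the diagonal terms, and relation (4) leaves $d_{0}h_{0}-d_{n+1}h_{n}=f-g$. Your care about the valid range $1\le i\le n$ of relation (2) and the surviving boundary terms is exactly the right point to watch, so nothing is missing.
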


\begin{lemma}\label{Precubic Homotopy}
If $h^{'}$ is a precubic homotopy from $f^{'}$ to  $g^{'}$, then $H^{'}_{n}=\sum\limits_{i=1}^{n}(-1)^{i}(h_{i}^{0}+h_{i}^{1})$ is a chain homotopy from $f^{'}$ to $g^{'}$ and therefore $f_{*}^{'}=g_{*}^{'}$.
\end{lemma}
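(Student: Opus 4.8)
The plan is to imitate the proof of Lemma~\ref{Loday Lemma 1.0.9}: I would show that $H'=(H'_n)$, with $H'_n=\sum_{i=1}^{n}(-1)^i(h_i^0+h_i^1)$, satisfies the chain homotopy identity
$$\partial'_{n+1}H'_n+H'_{n-1}\partial'_n=g'-f',$$
where (as in Lemma~\ref{Loday Lemma 1.0.7}) we take $\partial'=\sum_i(-1)^i(d_i^0-d_i^1)$; since $f'$ and $g'$ are precubic maps they commute with $\partial'$, so this identity immediately yields $f'_*=g'_*$ on homology. First I would expand $\partial'_{n+1}H'_n=\sum_{i=1}^{n+1}\sum_{j=1}^{n}(-1)^{i+j}(d_i^0-d_i^1)(h_j^0+h_j^1)$ and the analogous double sum for $H'_{n-1}\partial'_n$, and then organize the terms of $\partial'_{n+1}H'_n$ according to the four ranges $i<j$, $i=j$, $i=j+1$, $i>j+1$, which correspond to the four conditions of Definition~\ref{Cubic Homotopy}.

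For the two ``off-diagonal'' ranges I would push the face operators past the homotopy operators. When $i<j$, condition~(1) gives $(d_i^0-d_i^1)(h_j^0+h_j^1)=(h_{j-1}^0+h_{j-1}^1)(d_i^0-d_i^1)$, and relabeling $j-1$ as $j$ turns this family of terms into minus the $i\le j$ part of $H'_{n-1}\partial'_n$; when $i>j+1$, condition~(3) gives $(d_i^0-d_i^1)(h_j^0+h_j^1)=(h_j^0+h_j^1)(d_{i-1}^0-d_{i-1}^1)$, and relabeling $i-1$ as $i$ turns this family into minus the $i>j$ part of $H'_{n-1}\partial'_n$. Hence the entire off-diagonal part of $\partial'_{n+1}H'_n$ cancels $H'_{n-1}\partial'_n$, and what remains is the diagonal contribution $\sum_{j=1}^{n}\big[(d_j^0-d_j^1)-(d_{j+1}^0-d_{j+1}^1)\big](h_j^0+h_j^1)$.

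To evaluate the diagonal contribution (eight terms for each $j$) I would use condition~(2), which is tailored to cancel six of them in pairs: $d_j^0h_j^0=d_{j+1}^0h_j^1$ and $d_j^1h_j^0=d_{j+1}^1h_j^1$ (the cases $\varepsilon=0,1$ of the first identity) and $d_j^0h_j^1=d_{j+1}^0h_j^0$ (the second identity), leaving $d_{j+1}^1h_j^0-d_j^1h_j^1$ for each $j$. For $j\ge2$ the third identity $d_j^1h_j^1=d_j^1h_{j-1}^0$ rewrites this as $d_{j+1}^1h_j^0-d_j^1h_{j-1}^0$, so the sum over $j\ge2$ telescopes to $d_{n+1}^1h_n^0-d_2^1h_1^0$; for $j=1$ condition~(4) gives $d_1^1h_1^1=f'$, producing $d_2^1h_1^0-f'$. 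The two copies of $d_2^1h_1^0$ cancel, and condition~(4) again gives $d_{n+1}^1h_n^0=g'$, so the diagonal contribution equals $g'-f'$, which completes the verification.

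The delicate point is the sign-and-index bookkeeping in the diagonal sum: because the three identities of condition~(2) treat the superscripts $0$ and $1$ asymmetrically, one has to be careful which of the eight terms pairs with which, and the endpoint $j=1$ must be handled through condition~(4) rather than the third identity of~(2) (there is no operator $h_0^0$). The off-diagonal cancellations and the final telescoping are then just a mechanical reindexing of double sums, exactly parallel to the presimplicial case of Lemma~\ref{Loday Lemma 1.0.9}.
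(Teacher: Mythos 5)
Your proposal is correct and follows essentially the same route as the paper: expand $\partial'_{n+1}H'_n+H'_{n-1}\partial'_n$, cancel the off-diagonal terms against $H'_{n-1}\partial'_n$ via conditions (1) and (3), and reduce the remaining diagonal terms with condition (2) to a telescoping sum resolved by condition (4), yielding $g'-f'$. The only difference is cosmetic bookkeeping in which of the eight diagonal terms you cancel first; the paper applies the first identity of (2) and then groups across adjacent indices, which amounts to the same telescoping you carry out.
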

\begin{proof} To compute $\partial_{n+1}H'_{n} +H'_{n-1}\partial_{n}=\sum\limits_{i=1}^{n+1}\sum\limits_{j=1}^{n}(-1)^{i+j}(d_{i}^{0}-d_{i}^{1})(h_{j}^{0}+h_{j}^{1})+$
$\sum\limits_{j=1}^{n-1}\sum\limits_{i=1}^{n}(-1)^{i+j}(h_{j}^{0}+h_{j}^{1})(d_{i}^{0}-d_{i}^{1})$, we first use the conditions (1) and (3) to eliminate the second sum. From the first sum, only the terms of the type $d_{i}^{\varepsilon}h_{i}^{\delta}$ and $d_{i+1}^{\varepsilon}h_{i}^{\delta}$ remain. By using the first equation in the condition (2), we are left with the sum $-d_{1}^{1}h_{1}^{1}+(d_{1}^{0}h_{1}^{1}-d_{2}^{0}h_{1}^{0})+(d_{2}^{1}h_{1}^{0}-d_{2}^{1}h_{2}^{1})+ \cdots +(d_{i}^{0}h_{i}^{1}-d_{i+1}^{0}h_{i}^{0})+(d_{i+1}^{1}h_{i}^{0}-d_{i+1}^{1}h_{i+1}^{1})+ \cdots + (d_{n}^{0}h_{n}^{1}-d_{n+1}^{0}h_{n}^{0})+d_{n+1}^{1}h_{n}^{0}.$ Using the last two equations of the condition (2), we get $-d_{1}^{1}h_{1}^{1}+d_{n+1}^{1}h_{n}^{0}$ which is equal to $g^{'}-f^{'}$ as needed.
\end{proof}

\begin{proposition}
Recall that $$D_{n}^{j}({\bf x})=\sum\limits_{y \in Q}(x_{j},\ldots,x_{j},y,x_{j+1},\ldots,x_{n}) \hbox{ for } 1 \leq j \leq n$$ and $$F_{n}^{j}({\bf x})=\sum\limits_{y \in Q}(x_{j},\ldots,x_{j},y,x_{j},\ldots,x_{n}) \hbox{ for } 1 \leq j \leq n.$$
The collection of maps $D_{n}^{j},F_{n}^{j}:C_{n}^{R}(Q) \rightarrow C_{n+1}^{R}(Q)$ is a precubic homotopy between two precubic maps $|Q|\text{\emph{Id}}$ and $f_{s}^{n}$ $(f_{s}^{n}({\bf x}) = \sum\limits_{y \in Q}(y,\ldots,y))$ from $C_{n}^{R}(Q)$ to $C_{n}^{R}(Q)$. Then the map $G _{n}= \sum\limits_{j=1}^{n}(-1)^{j}(D_{n}^{j}+F_{n}^{j})$ is a chain homotopy from precubic maps $f_{s}^{0}=|Q|\text{\emph{Id}}$ to $f_{s}^{n}$ and therefore we have $|Q|\text{\emph{Id}} = (f_{s}^{n})_{\ast}$ on homology level.
\end{proposition}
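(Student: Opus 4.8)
The plan is to verify that the explicitly given collection $\{D_n^j, F_n^j\}$ satisfies the four axioms of Definition \ref{Cubic Homotopy}, with the roles $h_i^1 := D_n^i$ and $h_i^0 := F_n^i$, and then invoke Lemma \ref{Precubic Homotopy} to conclude that $G_n = \sum_{j=1}^n (-1)^j(D_n^j + F_n^j)$ is a chain homotopy from $f_s^0 = |Q|\mathrm{Id}$ to $f_s^n$, hence $(|Q|\mathrm{Id})_* = (f_s^n)_*$ on $H_n^R(Q)$. The bulk of the work is just reading off the needed identities from Corollary \ref{Corollary 2.2}, which was assembled precisely for this purpose: item (1') supplies all the "far apart" commutation relations $d_i^\delta h_j^\varepsilon = h_{j-1}^\varepsilon d_i^\delta$ for $i < j$ (axiom (1)) and the common value $\sum_{y}(x_j,\dots,x_j,y,x_{j+1},\dots,x_n)$ that several faces collapse to; item (1'') does the analogous bookkeeping for the $F$-maps (axiom (1) again); item (2) gives exactly the three equalities making up axiom (2), namely $d_i^\varepsilon F_n^i$ agreeing with $d_{i+1}^\varepsilon D_n^i$ in the appropriate pattern together with $d_i^0 D_n^i = d_{i+1}^0 F_n^i$ and $d_i^1 D_n^i = d_i^1 F_{n-1}^{i-1}$ (after translating indices, since $D$ plays the role of $h^1$ and $F$ of $h^0$); and item (3) gives axiom (3), the commutation $d_{i+1}^\delta h_j^\varepsilon = h_j^\varepsilon d_i^\delta$ for $j+1 \le i \le n$.

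First I would fix the dictionary carefully, because the sign and index conventions matter: in Definition \ref{Cubic Homotopy} the face operators are $d_i^0 = d_i^{(*_0)}$ and $d_i^1 = d_i^{(*)}$ (so $\partial' = \sum (-1)^i(d_i^0 - d_i^1)$ matches the rack boundary $\partial_n = \sum (-1)^i(d_i^{(*_0)} - d_i^{(*)})$, as noted after Definition \ref{Precubic module}), and I set $h_i^1 = D_n^i$, $h_i^0 = F_n^i$. Then I would check axiom (4): the computation in the proof of Theorem \ref{1} showed $d_{j}^{(*)}F_n^j = f_s^{j-1}$, so in particular $d_1^{(*)}F_1^1 = f_s^0 = |Q|\mathrm{Id}$; wait — one must be careful, since axiom (4) asks for $d_1^1 h_1^1 = f'$, i.e. $d_1^{(*)} D_n^1 = |Q|\mathrm{Id}$, and indeed $d_1^{(*)}D_n^1({\bf x}) = \sum_{y}(y,\dots,y,x_2,\dots,x_n)$ which is $f_s^0$ only after the telescoping identification — the cleanest route is simply to trust that the telescoping in Lemma \ref{Precubic Homotopy} reduces $\partial H' + H'\partial$ to $-d_1^1 h_1^1 + d_{n+1}^1 h_n^0$ and then identify $d_1^{(*)}D_n^1 = f_s^0 \cdot$-type term and $d_{n+1}^{(*)}F_n^n = f_s^n$ term from the displayed equalities already proved in Section \ref{Section 2}.

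The key steps in order: (a) state the dictionary $h^1 = D$, $h^0 = F$, $d^0 = d^{(*_0)}$, $d^1 = d^{(*)}$; (b) cite Corollary \ref{Corollary 2.2}(1') and (1'') for axiom (1) and for the collapsing of face maps to the common value; (c) cite Corollary \ref{Corollary 2.2}(2) for axiom (2); (d) cite Corollary \ref{Corollary 2.2}(3) for axiom (3); (e) verify axiom (4) directly from the two displayed computations $d_{j+1}^{(*)}F_n^j$-type and $d_{j}^{(*)}F_n^j = f_s^{j-1}$ in the proof of Theorem \ref{1}, giving $d_1^1 h_1^1 = f_s^0 = |Q|\mathrm{Id}$ and $d_{n+1}^1 h_n^0 = f_s^n$; (f) apply Lemma \ref{Precubic Homotopy} to get that $G_n$ is a chain homotopy from $|Q|\mathrm{Id}$ to $f_s^n$; (g) conclude $(|Q|\mathrm{Id})_* = (f_s^n)_*$ on $H_n^R(Q)$.

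The main obstacle is purely bookkeeping: making sure the index shifts between the abstract Definition \ref{Cubic Homotopy} (whose $h_i$ go $C_n \to C_{n+1}$ with $i = 1,\dots,n$) and the concrete $D_n^j, F_n^j$ line up, and in particular that the three sub-identities of axiom (2) correspond exactly — no more, no less — to the three equations recorded in Corollary \ref{Corollary 2.2}(2). I expect no genuine mathematical difficulty beyond that, since every ingredient has already been computed in the proof of Theorem \ref{1} and catalogued in Corollary \ref{Corollary 2.2}; the proposition is essentially a repackaging statement, and the only thing to watch is that one does not accidentally swap the roles of $D$ and $F$ (equivalently of $h^1$ and $h^0$) when matching axiom (4), since it is $d_1^{(*)}$ applied to the repeater-type homotopy, not the symmetrizer-insertion-type one, that must recover $|Q|\mathrm{Id}$.
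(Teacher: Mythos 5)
Your route is the same as the paper's --- fix a dictionary between $(d^{(*_0)},d^{(*)},D_n^j,F_n^j)$ and the abstract precubic data of Definition~\ref{Cubic Homotopy}, read the axioms off Corollary~\ref{Corollary 2.2} and the displayed computations of Section~\ref{Section 2}, then apply Lemma~\ref{Precubic Homotopy} --- but your dictionary is the wrong way around, and this is a genuine gap rather than bookkeeping. Keeping $d_i^0=d_i^{(*_0)}$, $d_i^1=d_i^{(*)}$, the assignment that satisfies the axioms is $h_j^0=D_n^j$, $h_j^1=F_n^j$ (this is what the paper's proof does), not $h^1=D$, $h^0=F$ as you set. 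With your labels, axiom (4) fails outright: $d_1^1h_1^1=d_1^{(*)}D_n^1({\bf x})=\sum_{y\in Q}(y,x_2,\ldots,x_n)=f_s^1({\bf x})\neq |Q|{\bf x}$, and $d_{n+1}^1h_n^0=d_{n+1}^{(*)}F_n^n({\bf x})=\sum_{y\in Q}(x_n,\ldots,x_n,y)\neq f_s^n({\bf x})$; moreover the first relation of axiom (2) fails for $\varepsilon=1$, since $d_i^{(*)}F_n^i=f_s^{i-1}$ while $d_{i+1}^{(*)}D_n^i=f_s^i$. Your proposed escape --- ``trust that the telescoping in Lemma~\ref{Precubic Homotopy} reduces $\partial H'+H'\partial$ to $-d_1^1h_1^1+d_{n+1}^1h_n^0$'' --- is circular: that telescoping is derived from axioms (1)--(3) holding with the stated labels, so it cannot be invoked when they do not. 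You even warn at the end against swapping $D$ and $F$, yet your step (a) is exactly that swap.

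With the correct dictionary, everything you need is already on record and your plan goes through verbatim: $d_1^{(*)}F_n^1=f_s^0=|Q|\,\mathrm{Id}$ and $d_{n+1}^{(*)}D_n^n=f_s^n$ give axiom (4) (this is precisely the fact you had in hand but could not use because of the mislabeling); Corollary~\ref{Corollary 2.2}(1') and (1'') give axiom (1); the equalities $d_i^{(\varepsilon)}D_n^i=d_{i+1}^{(\varepsilon)}F_n^i$, $d_i^{(*_0)}F_n^i=d_{i+1}^{(*_0)}D_n^i\ (=f_r^i)$, and $d_i^{(*)}F_n^i=d_i^{(*)}D_n^{i-1}\ (=f_s^{i-1})$, all contained in Corollary~\ref{Corollary 2.2}(1'),(2), give the three relations of axiom (2) (note your reading of item (2) as literally ``the three equalities of axiom (2)'' is also off --- it lists two identities, and the third comes from combining the displayed computations); and Corollary~\ref{Corollary 2.2}(3) gives axiom (3). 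Since $G_n=\sum_{j=1}^n(-1)^j(D_n^j+F_n^j)$ is symmetric in the two families, the final formula for the chain homotopy is unaffected by the labeling, but the verification of the precubic axioms is not, so the fix is to state the dictionary as in the paper and then conclude via Lemma~\ref{Precubic Homotopy} exactly as you intended.
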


\begin{proof}
Let $d_{i}^{0}=d_{i}^{(*_{0})}$, $d_{i}^{1}=d_{i}^{(*)}$, $h_{j}^{0}=D_{n}^{j}$, $h_{j}^{1}=F_{n}^{j}$, $f^{'}=|Q|\text{Id}$, and $g^{'}=f_{s}^{n}.$ By Corollary \ref{Corollary 2.2}, the collection $h_{i}^{\varepsilon}$ is a precubic homotopy from $f^{'}$ to $g^{'}.$ Therefore by Lemma \ref{Precubic Homotopy}, $f^{'}$ and $g^{'}$ are chain homotopic, and consequently they induce the equality on homology.
\end{proof}

\section{Multi-term homology}

Our main result can be extended from rack homology of quandles to multi-term homology of multi-quandles.

Recall after \cite{Prz-1} that $\textrm{Bin}(X)$ denotes a monoid of binary operations on a set $X$ with a
composition $*_1*_2$ defined by $a*_1*_2b= (a*_1b)*_2b$ and the identity element $*_0$ given by
$a*_0b=a$. We say that a subset $S\subset \textrm{Bin}(X)$ is a  distributive set if any pair $*_1,*_2\in S$
satisfies the distributivity property: $(a*_1b)*_2c= (a*_2c)*_1(b*_2c)$ for any $a,b,c \in X$. We call $(X;S)$ a multi-shelf (or a
multi-right-distributive system (RDS)). If each $(X;*_i)$ is a quandle, then it is called a multi-quandle. Let $S=(*_0,*_1,...,*_k)$, that is $(X;S)$ is a finite multi-quandle containing the identity operation.

We define a chain complex $(C_n,\partial^{(a_0,a_1,...,a_k)}) $ by putting $C_n=ZX^n$, $a_0,...,a_k$
integers, and $\partial^{(a_0,a_1,...,a_k)}= \sum\limits_{i=0}^k a_i \partial^{(*_i)}$. The homology of this
multi-quandle is called the multi-term rack homology and denoted by $H_n^{(a_0,a_1,...,a_k)}$.
We generalize Theorem \ref{1} as follows:

\begin{theorem} Let $(X;S)$ be a multi-quandle where $X$ is a finite set and $S=(*_0,*_1,...,*_k)$ satisfying the following conditions:\\
$(i)$ $\sum\limits_{i=0}^k a_k =0$,\\
$(ii)$ $*_0$ is the trivial operation and $a_0 \neq 0$, and\\
$(iii)$ $(X;*_i)$ is a quasigroup quandle for $i \geq 1$.\\
Then $a_0|X|$ annihilates the torsion of the multi-term homology $H_n^{(a_0,a_1,...,a_k)}$.
\end{theorem}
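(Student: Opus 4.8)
The plan is to follow the proof of Theorem \ref{1} almost verbatim, exploiting that the chain homotopies $D_n^j$ and $F_n^j$ there involve no binary operation, so the same maps serve all of $*_0,*_1,\dots,*_k$ at once; the only genuinely new work is the bookkeeping with the coefficients $a_i$ and a small replacement for the final ``free part'' argument.

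First I would express the multi-term boundary as a weighted sum of ordinary rack boundaries. Since $*_0$ is trivial by $(ii)$, $\partial^{(*_0)}$ is the simplicial (delete) boundary and $\partial^{R,i}:=\partial^{(*_0)}-\partial^{(*_i)}$ is exactly the rack boundary of the quandle $(X;*_i)$. Using $a_0=-\sum_{i=1}^{k}a_i$ from $(i)$,
$$\partial^{(a_0,a_1,\dots,a_k)}=\sum_{i=0}^{k}a_i\,\partial^{(*_i)}=-\sum_{i=1}^{k}a_i\,\partial^{R,i}.$$
Because $(X;*_i)$ is a quasigroup quandle for $i\ge 1$ by $(iii)$, the computation in the proof of Theorem \ref{1}, applied to each $\partial^{R,i}$ separately, gives for $1\le j\le n$
$$\partial^{R,i}_{n+1}D_n^j+D^j_{n-1}\partial^{R,i}_n=(-1)^j\big(f_s^j-f_r^j\big),\qquad
\partial^{R,i}_{n+1}F_n^j+F^j_{n-1}\partial^{R,i}_n=(-1)^j\big(f_r^j-f_s^{j-1}\big),$$
with $f_s^j({\bf x})=\sum_{y\in X}(y,\dots,y,x_{j+1},\dots,x_n)$ and $f_r^j({\bf x})=|X|(x_j,\dots,x_j,x_{j+1},\dots,x_n)$; crucially the right-hand sides do not depend on $i$. (The only algebraic inputs in Theorem \ref{1} are idempotence, the fact that $*_0$ deletes, and the left/right invertibility of a quasigroup quandle; all hold for every $*_i$, $i\ge1$, here.) Multiplying the $i$-th identity by $-a_i$, summing over $i=1,\dots,k$, and using $\sum_{i=1}^{k}a_i=-a_0$ together with the display above yields
$$\partial^{(a_0,\dots,a_k)}_{n+1}D_n^j+D^j_{n-1}\partial^{(a_0,\dots,a_k)}_n=(-1)^j a_0\big(f_s^j-f_r^j\big),$$
and likewise for $F_n^j$ with right-hand side $(-1)^j a_0(f_r^j-f_s^{j-1})$. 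Hence $a_0f_r^j\simeq a_0f_s^j$ and $a_0f_s^{j-1}\simeq a_0f_r^j$, and concatenating as in Theorem \ref{1},
$$a_0|X|\,\textrm{Id}=a_0 f_s^0=a_0 f_r^1\simeq a_0 f_s^1\simeq\cdots\simeq a_0 f_r^n\simeq a_0 f_s^n,$$
so $a_0|X|\,\textrm{Id}=(a_0 f_s^n)_\ast$ on $H_n^{(a_0,\dots,a_k)}$.

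The remaining point is that $(f_s^n)_\ast$ kills torsion. Since the free part of multi-term homology is not computed in the excerpt, I would argue via the augmentation rather than quote a ``free part'' statement as in Theorem \ref{1}. The map $f_s^n$ sends every generator of $C_n$ to the fixed chain $c=\sum_{y\in X}(y,\dots,y)$, which is a cycle (idempotence plus $(i)$); so for any cycle $z$, $f_s^n(z)=\varepsilon(z)\,c$, where $\varepsilon\colon C_n\to\mathbb{Z}$ is the augmentation $(x_1,\dots,x_n)\mapsto1$. By $(i)$, $\varepsilon\circ\partial^{(a_0,\dots,a_k)}=0$: each $d_l^{(*_i)}$ carries a generator to a generator, so $\varepsilon\partial^{(*_i)}=\big(\sum_l(-1)^l\big)\varepsilon$ independently of $i$, and $\sum_i a_i=0$ annihilates the sum. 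Hence $\varepsilon$ descends to $\bar\varepsilon\colon H_n^{(a_0,\dots,a_k)}\to\mathbb{Z}$, which vanishes on torsion because $\mathbb{Z}$ is torsion-free; therefore $(f_s^n)_\ast$ vanishes on torsion. Combining with the previous paragraph, $a_0|X|$ annihilates $\textrm{tor}\big(H_n^{(a_0,\dots,a_k)}\big)$, which (as $a_0\ne0$, by $(ii)$) is the asserted conclusion.

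The step needing the most care is the coefficient bookkeeping in the middle paragraph: confirming that the scalar in front of $(f_s^j-f_r^j)$ after forming the weighted combination is exactly $a_0$. This uses $\sum_{i=0}^{k}a_i=0$ twice --- to kill all face terms not involving the operations (the analogue of ``$(d_i^{(\ast)}-d_i^{(\ast_0)})D_n^j=0$'' from Theorem \ref{1}), and to turn $\sum_{i\ge1}a_i$ into $-a_0$ so that the trivial operation reenters with the correct sign. The concluding augmentation argument is the only place where the proof departs from that of Theorem \ref{1}, and it is routine.
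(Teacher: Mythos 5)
Your proof is correct and takes essentially the same route as the paper, which likewise reuses the operation-independent homotopies $D_n^j$ and $F_n^j$ from Theorem \ref{1} and combines them with the coefficients $a_i$ to get a chain homotopy between $a_0|X|\textrm{Id}$ and the map sending every generator to $a_0\sum_{y\in X}(y,\ldots,y)$ --- exactly the identity your weighted-sum bookkeeping produces (the paper's displayed ``generalized'' homotopies carry an overall factor $\sum_{i=0}^{k}a_i$, which vanishes by $(i)$ and is evidently a misprint for the combination you perform). Your closing augmentation argument merely makes explicit a step the paper leaves implicit, and it is a reasonable substitute since the free part of multi-term homology is not computed there.
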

\begin{proof}
We follow the proof of Theorem \ref{1} by properly generalizing chain homotopies $D_n^j$ and $F_n^j$.
Namely we define:
$$D_n^j(x_1,...,x_n)= \sum_{i=0}^ka_i\sum_{y\in X}(x_j,...,x_j,y,x_{j+1},...,x_n)$$
and
$$F_n^j(x_1,...,x_n)= \sum_{i=0}^ka_i\sum_{y\in X}(x_j,...,x_j,y,x_j,x_{j+1},...,x_n).$$

Combining homotopies $D_{n}^{j}$ and $F_{n}^{j}$ together, as in the proof of Theorem \ref{1}, we obtain the chain homotopy between $a_0|X|\textrm{Id}$ and $a_0\sum\limits_{y\in X}(y,...,y)$.
\end{proof}

\section{Future research}

Not much is known about the torsion of rack homology group in the case a quandle is not a quasigroup.

As we noted in Remark \ref{Remark 2.3}, the main theorem, Theorem \ref{1}, does not generalize directly to non-quasigroup quandles.
More data is needed to make conjectures in the general case\footnote{At least we can propose that for a finite quandle $Q$ the torsion of rack homology is annihilated by $|Q|!$
Additionally, we conjecture that if $Q$ is a finite connected quandle and $G_{Q}^{0}$ is the group of its inner automorphisms, then $|G_{Q}^{0}|$ annihilates $\textrm{tor} H_{n}^{R}(Q)$.
So far, we can show that $|G_{QS(6)}^{0}|=24$ annihilates the torsion of the rack homology of $QS(6)$, compare Remark \ref{Remark 2.3}.}.
However in \cite{N-P-2}, we make the specific conjecture that the number $k$ annihilates $\textrm{tor} H_{n}^{R}(R_{2k})$, unless $k=2^{t}$, $t > 1$; the number $2k$ is the smallest number annihilating $\textrm{tor} H_{n}^{R}(R_{2k})$ for $k=2^{t}$, $t > 1$.
For one-term rack homology, we found in \cite{CPP} many shelves with non-trivial torsion in homology.
However all of them are non-left-connected that is not connected under the action of $X$ on $X$ from the left side. It is still an open problem whether there is a left-connected shelf with non-trivial torsion in one-term distributive homology.

\section{Acknowledgements}
We would like to thank a referee for pointing out an inconsistency in Section 3.\\
J.~H.~Przytycki was partially supported by the NSA-AMS 091111 grant,
by the GWU REF grant, and Simons Collaboration Grant-316446. He also was co-financed by the European Union (European Social Fund - ESF) and Greek national funds through the Operational Program ``Education and Lifelong Learning" of the National Strategic Reference Framework (NSRF) - Research Funding Program: THALES: Reinforcement of the interdisciplinary and/or inter-institutional research and innovation.\\
Seung Yeop Yang was supported by the George Washington University fellowship.

\ \\ \ \\
J\'ozef H. Przytycki\\
Department of Mathematics,\\
The George Washington University,\\
Washington, DC 20052 and\\
University of Gda\'nsk\\
e-mail: {\tt przytyck@gwu.edu}\\ \ \\ \ \\
Seung Yeop Yang\\
Department of Mathematics,\\
The George Washington University,\\
Washington, DC 20052\\
e-mail: {\tt syyang@gwu.edu}

\end{document}